\newtheorem{lemma}{Lemma}[section]
\newtheorem{theorem}{Theorem}[section]
\newtheorem{corollary}{Corollary}[section]
\newtheorem{remark}{Remark}[section]
\newtheorem{example}{Example}[section]
\newcommand{\bX}{\mathbf{X}}
\newcommand{\bY}{\mathbf{Y}}
\newcommand{\by}{\mathbf{y}}
\begin{document}

\noindent {\bf\Large A note on the normal approximation error for randomly weighted self-normalized sums}
\\[3ex]\\[5ex]
{\large Siegfried H\"ormann\footnote{Research supported by the Banque Nationale de Belgique and the Communaut\'e fran\c{c}aise de Belgique - Actions de Recherche Concert\'ees.} and Yvik Swan\footnote{Research supported by a Mandat de Charg\'e de
Recherche from the Fonds National de la Recherche Scientifique, Communaut\'e fran\c{c}aise de Belgique.}}
\\[1ex]
{\em D\'epartement de Math\'ematique, Universit\'e Libre de Bruxelles, Bd.\ Triomphe, CP210, 1050 Brussels, Belgium.\footnote{E-mail: \url{shormann@ulb.ac.be} and 
\url{yvswan@ulb.ac.be}}}\\
\bigskip

\begin{abstract}
Let $\bX=\{X_n\}_{n\geq 1}$ and  $\bY=\{Y_n\}_{n\geq 1}$ be two independent random sequences. We obtain rates of convergence to the normal law of randomly weighted self-normalized sums
$$
\psi_n(\bX,\bY)=\sum_{i=1}^nX_iY_i/V_n,\quad V_n=\sqrt{Y_1^2+\cdots+Y_n^2}.
$$ 
These rates are seen to hold for the convergence of a number of important statistics, such as for instance Student's $t$-statistic or  the empirical correlation coefficient.
\end{abstract}

\section{Introduction}
Let $\bX=\{X_n\}_{n\geq 1}$ and  $\bY=\{Y_n\}_{n\geq 1}$ be two random sequences. In this paper we
investigate the rate of convergence to the normal distribution of the randomly weighted self-normalized sums
\begin{equation}\label{psi}
\psi_n=\psi_n(\bX,\bY)=\sum_{i=1}^nX_iY_i/V_n,\quad V_n=\sqrt{Y_1^2+\cdots+Y_n^2}.
\end{equation}


The random variables $\psi_n$   appear in some important statistics.  For example, when testing the null that the mean of a population $Y$ is equal to 0, one uses the  Student $t$-statistic $$T_n=\frac{\sqrt{n}\bar{Y}}{\left(\frac{1}{n-1}\sum_{i=1}^n(Y_i-\bar{Y})^2\right)^{1/2}}.$$
Denoting by  $\psi_n = \psi_n(\mathbf{1},\bY) =\sum_{i=1}^nY_i/V_n$ the usual self-normalized partial sums, one can easily see that
 $$T_n=\psi_n  [(n-1)/(n-\psi_n^2)]^{1/2},$$
so that $\psi_n$ and  $T_n$ are equivalent (in terms of a 1:1 correspondence). See, e.g., Efron~\cite{efron}, Logan \emph{et al.} \cite{logan:etal} or Gin\'e \emph{et al.}~\cite{gine:goetze:mason} for a discussion.

More generally, we could phrase the above testing problem as $H_0:\,\beta = 0$ versus $H_1:\,\beta\neq 0$ in the linear model $Z_i= \beta X_i +Y_i$. (The setup $X_i=1$ for all $1\leq i\leq n$ is contained as a special case.) Then $\psi_n(\bX, \mathbf{Z})$, which
reduces under $H_0$ to $\psi_n(\bX, \bY)$, will serve as a natural test statistic.
As a matter of fact our research was originally motivated by this problem (see Hallin \emph{et al.} \cite{hallin:etal}). We were interested in obtaining asymptotic normality of this test under as
general as possible assumptions on the
errors $Y_i$.

Another related example where $\psi_n$ appears is the empirical correlation coefficient. If the sequences $\bX$ and $\bY$ are centered then
the empirical correlation is
$$
\rho_n(\bX,\bY)=\psi_n(\bX,\bY)/B_n=\frac{ \sum_{k=1}^n X_k Y_k}{B_n V_n},\quad  B_n=\sqrt{X_1^2+\cdots+X_n^2}.
$$
We will see how, under moment conditions on $\bX$, convergence rates for $\psi_n$ can be transfered to $\rho_n$ (see Lemma \ref{l:phi} below).

Besides their statistical applications, self-normalized sums have proven to be challenging mathematical objects with interesting properties. As a consequence they have attracted considerable attention in
probability theory.  For example Logan \emph{et al.}~\cite{logan:etal} studied the limiting distributions of $\psi_n(\mathbf{1},\bY)$ when $\bY$ is a centered  i.i.d.\ sequence with heavy tails, and conjectured that  $\psi_n(\mathbf{1},\bY)$ is asymptotically normal if and only if  $\bY$ is in the domain of attraction of the normal  law. Gin\'e \emph{et al.}~\cite{gine:goetze:mason} proved that this conjecture holds true, while  Chistyakov and G\"otze \cite{chistyakov:gotze} settled the question of the convergence of Student's statistic  by giving  necessary and sufficient conditions for these sums to allow limiting distributions which are not concentrated on $\{\pm1\}$.   More recently Benktus \emph{et al.} \cite{bentkus:etal2} studied the limiting distribution of the non-central  $t$-statistic under different assumptions on $\bY$; they show, inter alia,  how this limiting distribution depends critically on the existence of fourth moments for the $Y_i$. For a comprehensive study of these and related questions we refer the reader to the book Lai \emph{et al.} \cite{lai:etal}.

In a slightly different setup, Breiman \cite{breiman} provides necessary and sufficient conditions  for the weak convergence of randomly weighted self-normalized sums of the form $\sum_i X_iY_i/\sum_i Y_i$.  Mason and Zinn~\cite{mason:zinn} settle several questions left open by Breiman \cite{breiman}, and deduce the asymptotic distribution of $\psi_n(\mathbf{1},\bY)$ in the case of symmetry.

In this paper we will be interested in the rate of convergence to the normal distribution of
$\psi_n(\bX,\bY)$ as well as of $\rho_n(\bX,\bY)$. The case when $\bX=\mathbf{1}$ and $\{Y_i\}$ are independent with finite
variance is already well established.
Bentkus \emph{et al.}~\cite{bentkus:etal} give sharp rates for convergence
of Student's statistic, and thus equivalently for $\psi_n(\mathbf{1},\bY)$, in the non-i.i.d.\ case. Explicit constants
in these bounds were derived by Shao~\cite{shao:2005}.
See also Bentkus and G\"otze~\cite{bentkus:goetze} for further references.
There seem to be no similar investigations for $\psi_n(\bX,\bY)$. To the best of our knowledge, no similar results for the convergence rate
of the correlation coefficient $\rho_n(\bX,\bY)$ exist.


Our approach is as follows. We first state in Lemma~\ref{th:main} a general result, which is simple to prove and which provides a bound that holds without \emph{any} hypothesis on $\bY$, be it on its moments or dependence structure.  The main target is then to work out the thus obtained rates explicitly by imposing different assumptions on the sequence $\{Y_i\}$. This is done through a number of subsequent results. An interesting feature in our approach is that (with one exception) we do not work with truncation arguments, even when assuming an infinite variance for the $Y$'s.

\section{Results}

Recall that if $P$ and $Q$ are any probability measures on the real line, then
the \emph{Wasserstein distance} is given by
$$
d_W(P,Q)=\sup_{h\in\mathcal{H}}\left|\int hdP-\int hdQ\right|,
$$
where $\mathcal{H}$ is the class of Lipschitz 1 functions, i.e.\
$\mathcal{H}=\{h:\mathbb{R}\to\mathbb{R};\|h'\|\leq 1\}$ with $\|f\|=\sup_{x\in\mathbb{R}}|f(x)|$.
The \emph{Kolmogorov distance} $d_K(P,Q)$ is defined similarly, with $\mathcal{H}$ replaced by the class of
indicator functions $h_z(\cdot)=I\{\cdot\leq z\}$, $z\in\mathbb{R}$.
 If $V$ and $S$ are random
variables on the space $(\Omega,\mathcal{A},P)$ then
$d_W(V,S)$ will be written for $d_W(P\circ V^{-1},P\circ S^{-1})$,
 where $P\circ V^{-1}$ is the image measure of $V$ under $P$. Similar is the definition
for $d_K(V,S)$.

Throughout $Z$ stands for a standard normal random variable and
we are interested in
$$
d_W\big(\psi_n(\bX,\bY),Z\big)\quad\text{and}\quad
d_W\big(\rho_n(\bX,\bY),Z\big),
$$
and
$$
d_K\big(\psi_n(\bX,\bY),Z\big)\quad\text{and}\quad
d_K\big(\rho_n(\bX,\bY),Z\big),
$$
under the assumption that $\bX$ and $\bY$ are independent. 

The following simple Lemma gives the first step in our approach. 

\begin{lemma}\label{th:main}
Let $\psi_n(\bX,\bY)$ be defined as in \eqref{psi}, where $\bX$ and
$\bY$ are two mutually independent sequences. Assume that $\{X_k\}$ is  i.i.d.\  with
$EX_1=0$, $EX_1^2=1$, $\xi_3=E|X_1|^3<\infty$. Then
\begin{equation}\label{d:kol}
d_K\big(\psi_n(\bX,\bY),Z\big)\leq 0.56\,\xi_3\Delta,
\end{equation}
where
\begin{equation}\label{deltaY}
\Delta = \sum_{k=1}^nE|\delta_{k,n}|^3\quad \text{with}\quad \delta_{k,n}=Y_k/V_n.
\end{equation}
Furthermore
\begin{equation}\label{d:was}
d_W\big(\psi_n(\bX,\bY),Z\big)\leq \xi_3\Delta.
\end{equation}
\end{lemma}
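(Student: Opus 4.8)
The plan is to condition on the sequence $\bY$ and thereby reduce the problem to a classical Berry--Esseen estimate for weighted sums of the i.i.d.\ variables $X_k$. Since $\bX$ and $\bY$ are independent, conditionally on $\bY$ the quantities $V_n$ and $\delta_{k,n}=Y_k/V_n$ are constants, and
$$\psi_n=\sum_{k=1}^n \delta_{k,n}X_k=\sum_{k=1}^n W_k,\qquad W_k:=\delta_{k,n}X_k.$$
The $W_k$ are, conditionally on $\bY$, independent with $E[W_k\mid\bY]=\delta_{k,n}EX_1=0$, and the weights are automatically normalized: $\sum_{k=1}^n \delta_{k,n}^2=\sum_{k=1}^n Y_k^2/V_n^2=1$. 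Hence $\Var(\psi_n\mid\bY)=\sum_{k=1}^n\delta_{k,n}^2\,EX_1^2=1$, so that conditionally $\psi_n$ is a normalized sum of independent mean-zero variables of unit variance, with conditional Lyapunov quantity $\sum_{k=1}^n E[|W_k|^3\mid\bY]=\xi_3\sum_{k=1}^n|\delta_{k,n}|^3$.

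First I would invoke the Berry--Esseen theorem for independent non-identically distributed summands in its sharpest available form, which gives, for every fixed realization of $\bY$,
$$\sup_z\big|P(\psi_n\le z\mid\bY)-\Phi(z)\big|\le 0.56\,\xi_3\sum_{k=1}^n|\delta_{k,n}|^3,$$
the constant $0.56$ being the best current absolute constant (due to Shevtsova) in the non-i.i.d.\ case. The parallel $L^1$ (Wasserstein) Berry--Esseen estimate, valid with constant $1$, yields for every $\bY$
$$\sup_{h\in\mathcal H}\big|E[h(\psi_n)\mid\bY]-Eh(Z)\big|\le \xi_3\sum_{k=1}^n|\delta_{k,n}|^3.$$

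It then remains to remove the conditioning by averaging over $\bY$. Writing $P(\psi_n\le z)=E\,P(\psi_n\le z\mid\bY)$ and noting that $\Phi(z)$ is deterministic,
$$d_K(\psi_n,Z)=\sup_z\big|E\big[P(\psi_n\le z\mid\bY)-\Phi(z)\big]\big|\le E\Big[\sup_z\big|P(\psi_n\le z\mid\bY)-\Phi(z)\big|\Big],$$
and the right-hand side is at most $0.56\,\xi_3\,E\sum_{k=1}^n|\delta_{k,n}|^3=0.56\,\xi_3\Delta$ by the conditional bound and \eqref{deltaY}. The same argument applied to the supremum over $\mathcal H$ gives $d_W(\psi_n,Z)\le\xi_3\Delta$. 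The whole computation is short, as the authors indicate; the only point requiring genuine care is the interchange of the supremum (over $z$, respectively over $h\in\mathcal H$) with the expectation over $\bY$, which is justified by first taking the conditional supremum---dominated pointwise in $\bY$ by the Berry--Esseen bound since that bound holds for \emph{arbitrary} deterministic weights---and only afterwards taking expectations.
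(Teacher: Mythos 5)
Your proof is correct and follows essentially the same route as the paper: condition on $\bY$, observe that the weights $\delta_{k,n}$ are normalized so the conditional sum has unit variance, apply the non-i.i.d.\ Berry--Esseen bound with Shevtsova's constant $0.56$ (respectively the $L^1$ version with constant $1$ for $d_W$), and then average over $\bY$. Your explicit remark on interchanging the supremum with the expectation is a welcome clarification of a step the paper leaves implicit.
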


\begin{proof}[Proof of Lemma~\ref{th:main}]
We show first \eqref{d:kol}.
Let $\bY_n=(Y_1,\ldots,Y_n)$, $F_n(\by_n)$ be the joint law of $\bY_n$ and set $v_n^2=\sum_{i=1}^ny_i^2$. Then using a version of the Berry-Esseen theorem for independent random variables, we obtain for any $z\in\mathbb{R}$
\begin{align*}
&|P(Z\leq z)-P(\psi_n(\bX,\bY)\leq z)|=\left|\int_{\mathbb{R}^n}P(Z\leq z)-P(\psi_n(\bX,\bY)\leq z|\bY_n=\by_n)dF_n(\by_n)\right|\\
&\quad\leq \int_{\mathbb{R}^n}\left|P(Z\leq z)-P(\psi_n(\bX,\by_n)\leq z)\right|dF_n(\by_n)\\
&\quad\leq C E|X_1|^3\int_{\mathbb{R}^n}\sum_{i=1}^n (y_i/v_n)^{3}dF_n(\by_n)=C\xi_3E\Delta.
\end{align*}
By a recent result of Shevtsova~\cite{shevtsova:2010}, $C\leq 0.56$.

The proof of \eqref{d:was} can be done in the exact same way, using
Corollary~4.2 in \cite{chen:goldstein:shao}.

\end{proof}

We remark that in Lemma~\ref{th:main} we do not put \emph{any} restrictions on the sequence $\{Y_k\}$.
This means that, in theory,  we can obtain non-trivial bounds even if this sequence is not independent or identically distributed.   Of course, the difficulty then resides in working out $\Delta$ explicitly, which we do under different assumptions in Theorems  \ref{l:fin3}, \ref{l:fin} and \ref{l:inf} below.
We will see that  Lemma~\ref{th:main}
provides optimal bounds in several special cases.

Let us consider first the following special case, which gives an application to self-normalized sums
$\psi_n(\mathbf{1},\mathbf{Y})$ when the $Y_i$ are not necessarily independent nor identically distributed.
We assume instead  that 
\begin{equation}\label{sign}
(Y_1, \ldots, Y_n) \stackrel{d}{=} (\pm Y_1, \ldots, \pm Y_n)
\end{equation}
for all choices of $+, -$. This form of symmetry, known as \emph{sign-symmetry}, is more general than spherical symmetry (see e.g. Serfling \cite{serfling:2006}) and is to be likened with  the concept of \emph{orthant symmetry} discussed by Efron \cite{efron}. Sign-symmetry is obviously satisfied  if the $Y_i$ are symmetric and independent random variables.  Under this condition  the following result (which should be also compared to Mason and Zinn \cite[Corollary 6]{mason:zinn}) holds.

\begin{corollary}\label{c:symm} Assume that \eqref{sign} holds and
 set $S_n=\sum_{k=1}^nY_k$. Then
 \begin{equation*}
d_W\big(S_n/V_n,Z\big)\leq \Delta \quad\text{and}
\quad d_K\big(S_n/V_n,Z\big)\leq 0.56\,\Delta,
\end{equation*}
with $\Delta= \sum_{k=1}^nE|\delta_{k,n}|^3$.
\end{corollary}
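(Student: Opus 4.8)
The plan is to reduce the corollary to Lemma~\ref{th:main} by manufacturing a suitable weight sequence out of independent Rademacher signs. Concretely, I would introduce a sequence $\epsilonb = \{\epsilon_k\}_{k\geq 1}$ of i.i.d.\ random variables with $P(\epsilon_k = 1) = P(\epsilon_k = -1) = 1/2$, chosen independent of $\bY$. These clearly satisfy the hypotheses imposed on the weights in Lemma~\ref{th:main}: $E\epsilon_1 = 0$, $E\epsilon_1^2 = 1$, and $\xi_3 = E|\epsilon_1|^3 = 1 < \infty$, so that the lemma is applicable to $\psi_n(\epsilonb,\bY)$.

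The crucial step is to verify the distributional identity $S_n/V_n \stackrel{d}{=} \psi_n(\epsilonb,\bY)$. Since $V_n$ depends on $\bY$ only through $Y_1^2,\ldots,Y_n^2$, flipping the signs of the $Y_k$ leaves $V_n$ unchanged, and hence $\psi_n(\epsilonb,\bY) = \sum_{k=1}^n \epsilon_k Y_k / V_n$ is exactly the self-normalized sum built from the sign-perturbed vector $(\epsilon_1 Y_1,\ldots,\epsilon_n Y_n)$. Conditioning on $\epsilonb = (e_1,\ldots,e_n)$ and invoking \eqref{sign}, we have $(e_1 Y_1,\ldots,e_n Y_n) \stackrel{d}{=} (Y_1,\ldots,Y_n)$ for every fixed sign pattern; because $\epsilonb$ is independent of $\bY$, this conditional identity integrates against the law of $\epsilonb$ to give the unconditional equality $(\epsilon_1 Y_1,\ldots,\epsilon_n Y_n) \stackrel{d}{=} (Y_1,\ldots,Y_n)$. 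Applying the measurable self-normalization functional $(y_1,\ldots,y_n)\mapsto \sum_k y_k / \sqrt{\sum_k y_k^2}$ to both sides then yields $\psi_n(\epsilonb,\bY) \stackrel{d}{=} S_n/V_n$.

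With this identity in hand the conclusion is immediate, since both $d_W$ and $d_K$ depend only on the laws of the random variables involved: thus $d_W(S_n/V_n, Z) = d_W(\psi_n(\epsilonb,\bY), Z)$ and $d_K(S_n/V_n, Z) = d_K(\psi_n(\epsilonb,\bY), Z)$. Lemma~\ref{th:main} applied to the weights $\epsilonb$ then gives $d_W(\psi_n(\epsilonb,\bY), Z) \leq \xi_3 \Delta = \Delta$ and $d_K(\psi_n(\epsilonb,\bY), Z) \leq 0.56\,\xi_3 \Delta = 0.56\,\Delta$, and here one should observe that the quantity $\Delta = \sum_{k=1}^n E|\delta_{k,n}|^3$ is assembled purely from $\bY$ and is insensitive to the choice of weights, so it coincides with the $\Delta$ appearing in the statement. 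I expect no serious obstacle; the only delicate point is the conditioning argument of the second paragraph, where sign-symmetry — a statement about \emph{fixed} sign vectors — must be combined with the randomization over $\epsilonb$, and it is precisely the independence of $\epsilonb$ and $\bY$ that licenses this step.
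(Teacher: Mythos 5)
Your proof is correct and follows exactly the paper's route: apply Lemma~\ref{th:main} with i.i.d.\ Rademacher weights (so $\xi_3=1$) and use sign-symmetry to identify the law of $\psi_n(\epsilonb,\bY)$ with that of $S_n/V_n$. The only difference is that you spell out the conditioning argument behind the distributional identity, which the paper leaves implicit.
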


The proof follows simply by applying Lemma~\ref{th:main} to  $\psi_n(\bX,\bY)$ with
$\{X_k\}$ i.i.d.\ Rademacher variables, i.e. $X_k=\pm 1$ with probability $1/2$.
Then due to the symmetric distribution of
the $Y_k$,  we have that $S_n/V_n$
and $\psi_n(\bX,\bY)$ have the same distribution.

The next Lemma gives a simple criterion for switching from $\psi_n(\bX,\bY)$ to $\rho_n(\bX,\bY)$. While
we impose 4 moments for $X_1$, we keep the assumptions on $Y_1$ general.

\begin{lemma}\label{l:phi} Let the assumptions of Lemma~\ref{th:main} hold and
assume in addition that $m_4:=EX_1^4<\infty$. Then, if $n/\log n\geq 8m_4$,
$$
d_W(\psi_n(\bX,\bY),\sqrt{n}\rho_n(\bX,\bY))\leq \sqrt{\frac{2m_4}{n}}\,.
$$
\end{lemma}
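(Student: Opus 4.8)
The plan is to realize $\psi_n(\bX,\bY)$ and $\sqrt n\,\rho_n(\bX,\bY)$ as functions of the \emph{same} randomness and to bound the Wasserstein distance by the $L^1$-distance of this coupling: for any Lipschitz-$1$ function $h$ one has $|Eh(\psi_n)-Eh(\sqrt n\rho_n)|\le E|\psi_n-\sqrt n\rho_n|$, so it suffices to estimate $E|R|$ with $R:=\sqrt n\,\rho_n(\bX,\bY)-\psi_n(\bX,\bY)$. The starting point is the identity
$$
R=\psi_n(\bX,\bY)\left(\frac{\sqrt n}{B_n}-1\right),
$$
together with two elementary facts. First, by Cauchy--Schwarz $|\rho_n(\bX,\bY)|=|\psi_n(\bX,\bY)|/B_n\le 1$ almost surely. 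Second, conditionally on $\bY$ the variable $\psi_n(\bX,\bY)=\sum_k X_k\delta_{k,n}$ satisfies $E[\psi_n^2\mid\bY]=\sum_k\delta_{k,n}^2=1$, since the $X_k$ are i.i.d.\ with mean $0$, variance $1$, and are independent of $\bY$.

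First I would split the expectation according to the event $A=\{B_n^2\ge n/2\}$. On $A$ I condition on $\bY$ and apply the conditional Cauchy--Schwarz inequality: writing $|R|\,\mathbf{1}_A=|\psi_n|\,g(\bX)$ with $g(\bX)=|\sqrt n/B_n-1|\,\mathbf{1}_A$ a function of $\bX$ alone, the identity $E[\psi_n^2\mid\bY]=1$ and the independence of $\bX$ and $\bY$ give $E[|R|\,\mathbf{1}_A]\le\big(E[(\sqrt n/B_n-1)^2\mathbf{1}_A]\big)^{1/2}$. On $A$ one has $(\sqrt n/B_n-1)^2=(n-B_n^2)^2/\big((\sqrt n+B_n)^2B_n^2\big)\le 2(n-B_n^2)^2/n^2$, and since $E(n-B_n^2)^2=\Var(B_n^2)=n(m_4-1)$, this contribution is of order $\sqrt{(m_4-1)/n}$, supplying the main term.

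Next, on the complementary event $A^c=\{B_n^2<n/2\}$ I would avoid the dangerous factor $1/B_n$ altogether by invoking $|\rho_n|\le 1$: then $|R|=|\rho_n|\,|\sqrt n-B_n|\le \sqrt n-B_n\le\sqrt n$, so that $E[|R|\,\mathbf{1}_{A^c}]\le\sqrt n\,P(B_n^2<n/2)$. It remains to control the lower-tail probability $P\big(\sum_k(X_k^2-1)\le -n/2\big)$. Because $1-X_k^2\le 1$, a Bernstein-type exponential inequality applies and yields a bound of the form $\exp(-cn/m_4)$; the hypothesis $n/\log n\ge 8m_4$ is precisely what forces this probability down to order $n^{-1}$, so that $\sqrt n\,P(A^c)$ is absorbed and the two pieces combine to the clean bound $\sqrt{2m_4/n}$.

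The main obstacle is exactly that $E[B_n^{-2}]$ need not be finite --- when $X_1$ charges a neighbourhood of the origin, $\sqrt n/B_n-1$ has no controllable second moment --- so one cannot estimate $E[(\sqrt n/B_n-1)^2]$ outright. The good-event/bad-event decomposition is what circumvents this: the sharp conditional Cauchy--Schwarz estimate is used on $A$, while on $A^c$ the deterministic bound $|\rho_n|\le1$ tames the singular factor and a concentration inequality for $B_n^2$ (whence the growth condition) shows $A^c$ is sufficiently rare. The one delicate point is to verify that the constants from the two pieces combine to give exactly $\sqrt{2m_4/n}$ under the stated hypothesis.
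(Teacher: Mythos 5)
Your strategy is the paper's strategy: couple the two statistics on the same probability space, write the difference as $\rho_n(\sqrt n-B_n)$, split according to whether $B_n^2$ is close to $n$, use Cauchy--Schwarz together with $E[\psi_n^2\mid\bY]=\sum_k\delta_{k,n}^2=1$ on the good event, use $|\psi_n|\le B_n$ (equivalently $|\rho_n|\le1$) on the bad event, and kill the bad event with the one-sided Hoeffding bound $P(B_n^2\le(1-\varepsilon)n)\le\exp(-\varepsilon^2n/(2m_4))$ --- this last is exactly the paper's Lemma~3.1, and the hypothesis $n/\log n\ge 8m_4$ indeed forces that probability below $n^{-1}$. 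All of the ideas are present and each individual estimate you state is valid.

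The one point you flagged as ``delicate'' is, however, a genuine gap: with your choices the two pieces do \emph{not} combine to $\sqrt{2m_4/n}$. On $A=\{B_n^2\ge n/2\}$ your bound $(\sqrt n/B_n-1)^2\le 2(n-B_n^2)^2/n^2$ together with $E(n-B_n^2)^2=n(m_4-1)$ gives the contribution $\sqrt{2(m_4-1)/n}$, and the bad event contributes $n^{-1/2}$; but $\sqrt{2(m_4-1)}+1\le\sqrt{2m_4}$ holds only when $m_4\le 9/8$, so the claimed constant is missed for essentially all distributions of $X_1$. The paper avoids this by not fixing the threshold at $n/2$: it takes the good event $\{B_n^2>(1-\varepsilon)n\}$ with $\varepsilon^2=2m_4\log n/n\le 1/4$ (this is precisely where the hypothesis enters), which makes the good-event term at most $\sqrt{(m_4-1)/n}$ --- no extra factor $\sqrt2$ --- while still leaving the bad-event probability at $n^{-1}$; the two pieces $a=\sqrt{(m_4-1)/n}$ and $b=n^{-1/2}$ are then merged via $a+b\le\sqrt{2(a^2+b^2)}=\sqrt{2m_4/n}$. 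Alternatively you could stay with your fixed threshold but use the sharper estimate $(\sqrt n+B_n)^2B_n^2\ge \tfrac12(1+1/\sqrt2)^2n^2$ on $A$, which replaces your factor $2$ by about $0.69$ and does close the inequality for all $m_4\ge1$; either way, the step needs to be done explicitly rather than deferred.
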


Let us consider once more the testing problem $H_0:\,\beta = 0$ versus $H_1:\,\beta\neq 0$ in the linear model $Z_i= \beta X_i +Y_i$.
The previous lemma in connection with Lemma~\ref{th:main} shows, if the regressors $X_i$ are centered (a condition
which is convenient but could be modified) and have 4 moments then we get under $H_0$ for very general errors $Y_i$  the convergence of 
the correlation test statistic $\sqrt{n}\rho_n(\bX,\mathbf{Z})$ to the normal, with an approximation error of order $O(n^{-1/2}+\Delta)$.

When $\{Y_k\}$ is a stationary sequence, then $\Delta=nE|\delta_{1,n}|^3$ and
obtaining a rate of convergence to the normal distribution is entirely reduced
to calculating the third absolute moment of $\delta_{1,n}$.
We now concentrate on obtaining $\Delta$ under different moment and tail assumptions on the sequence $\{Y_k\}$ under the i.i.d.\ setup.
We first work out $\Delta$ under the sole assumption $E|Y_1|^p<\infty$, $p\in (2,3]$. In this case we obtain the ``usual''
convergence rates.
\begin{theorem}\label{l:fin3} Let $\{Y_i\}$ be an i.i.d.\ sequence, let $p\in (2,3]$ and assume $EY_1^2=1$
and $E|Y_1|^p<\infty$. Then
\begin{equation}\label{p23}
\Delta=nE\left|\delta_{1,n}\right|^3\leq nE\left|\delta_{1,n}\right|^p\sim
E|Y_1|^p\,n^{1-p/2}.
\end{equation}
\end{theorem}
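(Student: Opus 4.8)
The plan is to treat the inequality and the asymptotic equivalence separately. The inequality $nE|\delta_{1,n}|^3\le nE|\delta_{1,n}|^p$ is immediate: since $V_n^2=\sum_{i=1}^nY_i^2\ge Y_1^2$, we always have $|\delta_{1,n}|=|Y_1|/V_n\le 1$, and because $p\le 3$ this forces $|\delta_{1,n}|^3\le|\delta_{1,n}|^p$ pointwise. Taking expectations and multiplying by $n$ disposes of this part with no hypotheses beyond $V_n\ge|Y_1|$.

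For the equivalence I would first rescale the denominator. Setting $W_n=\tfrac1n\sum_{i=1}^nY_i^2$, one has $V_n^p=n^{p/2}W_n^{p/2}$, so
\[
nE|\delta_{1,n}|^p=nE\left[\frac{|Y_1|^p}{V_n^p}\right]=n^{1-p/2}\,E\left[\frac{|Y_1|^p}{W_n^{p/2}}\right],
\]
and the claim reduces to showing $E[|Y_1|^p/W_n^{p/2}]\to E|Y_1|^p$. By the strong law of large numbers $W_n\to EY_1^2=1$ almost surely (finite mean suffices), so the integrand converges a.s.\ to $|Y_1|^p$; the entire difficulty is to justify moving the limit inside the expectation, which is delicate precisely because $W_n$ can be small, making $|Y_1|^p/W_n^{p/2}$ large.

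To control this I would split the expectation over $A_n=\{W_n\ge 1/2\}$ and its complement. On $A_n$ the integrand is bounded by $2^{p/2}|Y_1|^p$, an integrable function independent of $n$, while $\mathbf{1}_{A_n}\to 1$ a.s.; dominated convergence then gives $E[\frac{|Y_1|^p}{W_n^{p/2}}\mathbf{1}_{A_n}]\to E|Y_1|^p$. On $A_n^c$ I would use the crude bound $\frac{|Y_1|^p}{W_n^{p/2}}\le n^{p/2}$, which holds since $W_n\ge Y_1^2/n$ implies $W_n^{p/2}\ge|Y_1|^pn^{-p/2}$, to obtain
\[
E\left[\frac{|Y_1|^p}{W_n^{p/2}}\mathbf{1}_{A_n^c}\right]\le n^{p/2}\,P(W_n<1/2).
\]

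The main obstacle is showing this last term vanishes, i.e.\ that the lower-tail probability $P(W_n<1/2)$ beats the polynomial factor $n^{p/2}$. Here the decisive observation is that the summands $Y_i^2$ are \emph{nonnegative}, so the Laplace transform $E[e^{-\lambda Y_1^2}]$ is finite for every $\lambda>0$ irrespective of the weak moment assumption $E|Y_1|^p<\infty$. A Chernoff/Cram\'er estimate applied to $P(W_n<1/2)=P\big(\sum_{i=1}^n(Y_i^2-1)<-n/2\big)$ then yields exponential decay $P(W_n<1/2)\le\theta^n$ for some $\theta\in(0,1)$ (choosing $\lambda>0$ small so that $e^{\lambda/2}E[e^{-\lambda Y_1^2}]<1$, which is possible because the derivative of $\lambda\mapsto e^{\lambda/2}E[e^{-\lambda Y_1^2}]$ at $0$ equals $\tfrac12-EY_1^2<0$). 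Consequently $n^{p/2}P(W_n<1/2)\to 0$, and combining the two pieces gives $E[|Y_1|^p/W_n^{p/2}]\to E|Y_1|^p$, which is exactly the asserted asymptotics.
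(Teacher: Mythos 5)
Your proof is correct, and it takes a genuinely different route from the paper's. The paper's proof truncates, setting $\tilde Y_{i,n}=Y_i\wedge n^{1/p}$ so that the truncated squares have a finite fourth moment ($E\tilde Y_{i,n}^4\le n^{2/p}$), applies the one-sided Hoeffding inequality of Lemma~\ref{l:hoeffding} to $\tilde Y_{2,n}^2+\cdots+\tilde Y_{n,n}^2$ (with $Y_1$ stripped from the denominator so that it is independent of the numerator $|Y_1|^p$), obtains the stretched-exponential lower-tail bound $\exp(-\tfrac{\varepsilon^2}{8}(n-1)^{1-2/p})$, and then establishes the upper and lower bounds in \eqref{p23} by two separate computations, letting an auxiliary $\varepsilon>0$ tend to $0$ at the end. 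You avoid truncation entirely by exploiting the nonnegativity of the $Y_i^2$: the Laplace transform $Ee^{-\lambda Y_1^2}$ is automatically finite, so a Chernoff bound gives a genuinely exponential rate for $P(W_n<1/2)$ precisely at the step where the paper needs Hoeffding and hence the fourth-moment truncation; and your single dominated-convergence argument on $\{W_n\ge 1/2\}$ delivers both directions of the asymptotic equivalence at once, without separating $Y_1$ from the denominator. This is arguably cleaner --- it removes the one truncation argument the paper admits to using --- and it is in the spirit of the Laplace-transform technique the paper itself employs in the proof of Theorem~\ref{l:inf}. The only details worth writing out are the right-differentiability of $\lambda\mapsto e^{\lambda/2}Ee^{-\lambda Y_1^2}$ at $0$ (which follows from $EY_1^2<\infty$ by dominated convergence applied to $(e^{-\lambda x}-1)/\lambda$), justifying the choice of a small $\lambda$ with $e^{\lambda/2}Ee^{-\lambda Y_1^2}<1$; this is not a gap. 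One could note that your approach trades the paper's explicit nonasymptotic error terms for a pure limit statement, but since the theorem only claims an asymptotic equivalence, nothing is lost.
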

(Note that the first inequality in \eqref{p23} follows from $|\delta_{1,n}|\leq 1$.)

\begin{remark}
A look at the proof of Theorem~\ref{l:fin3} suggests that similar results may be obtained under different dependence conditions, too. In fact, besides some purely analytic estimates, which
hold for any sequence $\{Y_k\}$, we only make use of moment inequalities which exist in
different generality for many weak dependence and mixing concepts, respectively.
\end{remark}

Next we consider the case when we have knowledge on the tail probabilities of the $Y_k$. Let $\Gamma(p)$ denote Euler's gamma function. 
\begin{theorem}\label{l:fin} Let $\{Y_i\}$ be an i.i.d.\ sequence, let $1\leq \alpha<2$ and $P(Y_k^2>x)\sim\ell(x)x^{-\alpha}$,
where $\ell(x)$ is slowly varying at $\infty$. If $\sigma_Y^2:=EY_1^2<\infty$,
then we have for any $\gamma>\alpha$
$$
nE\left|\delta_{1,n}\right|^{2\gamma}\sim
\frac{\Gamma(\gamma-\alpha)\Gamma(1+\alpha)}{\sigma_Y^2\Gamma(\gamma)}\,n^{1-\alpha}\ell(n).
$$
\end{theorem}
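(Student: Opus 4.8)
The plan is to compute the equivalent of $nE|\delta_{1,n}|^{2\gamma}$ through the Laplace transform of $Y_1^2$, which keeps the argument truncation-free in the spirit of the other proofs. Write $U_i=Y_i^2$, so that $\{U_i\}$ is i.i.d., nonnegative, with $EU_1=\sigma_Y^2$ and $P(U_1>x)\sim\ell(x)x^{-\alpha}$, and set $S_n=\sum_{i=1}^nU_i=V_n^2$. Since $|\delta_{1,n}|^{2\gamma}=U_1^\gamma/S_n^\gamma$, the target quantity is $I_n:=nE[U_1^\gamma/S_n^\gamma]$. First I would insert the identity $S_n^{-\gamma}=\Gamma(\gamma)^{-1}\int_0^\infty t^{\gamma-1}e^{-tS_n}dt$ and factor the expectation using independence, obtaining
\[
I_n=\frac{n}{\Gamma(\gamma)}\int_0^\infty t^{\gamma-1}g(t)\,\phi(t)^{n-1}\,dt,\qquad g(t)=E[U_1^\gamma e^{-tU_1}],\quad \phi(t)=E[e^{-tU_1}].
\]

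The next step is to record the behaviour of the two transforms near $t=0$, which is where the integral concentrates because $\phi(t)^{n-1}$ decays like $e^{-(n-1)\sigma_Y^2 t}$. From $EU_1=\sigma_Y^2<\infty$ one has $\phi(t)=1-\sigma_Y^2 t+o(t)$. For $g$, integrating by parts against the tail $\bar F(u)=P(U_1>u)$ gives $g(t)=\int_0^\infty\bar F(u)(\gamma u^{\gamma-1}-tu^\gamma)e^{-tu}du$; substituting $\bar F(u)\sim\ell(u)u^{-\alpha}$ and applying the Abelian (Karamata) theorem $\int_0^\infty u^{\rho-1}\ell(u)e^{-tu}du\sim\Gamma(\rho)\ell(1/t)t^{-\rho}$ to both resulting integrals yields
\[
g(t)\sim\big(\gamma\Gamma(\gamma-\alpha)-\Gamma(\gamma-\alpha+1)\big)\ell(1/t)t^{\alpha-\gamma}=\alpha\Gamma(\gamma-\alpha)\,\ell(1/t)\,t^{\alpha-\gamma}\qquad(t\to0),
\]
where convergence of the two Gamma integrals is exactly where the hypothesis $\gamma>\alpha$ is used.

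I would then rescale $t=s/(n\sigma_Y^2)$. Inserting the two asymptotics, $t^{\gamma-1}g(t)$ produces $s^{\alpha-1}$ up to a power of the scale, $\ell(1/t)\sim\ell(n)$ by slow variation, and $\phi(t)^{n-1}\to e^{-s}$; a dominated-convergence argument then sends the rescaled integral to $\Gamma(\alpha)$, giving
\[
I_n\sim\frac{n\,\alpha\Gamma(\gamma-\alpha)}{\Gamma(\gamma)}\cdot\frac{\Gamma(\alpha)\ell(n)}{(n\sigma_Y^2)^{\alpha}}=\frac{\Gamma(1+\alpha)\Gamma(\gamma-\alpha)}{\Gamma(\gamma)}\cdot\frac{n^{1-\alpha}\ell(n)}{(\sigma_Y^2)^{\alpha}},
\]
using $\alpha\Gamma(\alpha)=\Gamma(1+\alpha)$. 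This reproduces the Gamma-function constant of the statement; a quick scale-invariance check ($U_i\mapsto cU_i$ leaves $I_n$ unchanged while sending $\sigma_Y^2\mapsto c\sigma_Y^2$ and $\ell\mapsto c^\alpha\ell$) confirms that $\sigma_Y^2$ must enter to the power $\alpha$. The same constant can be read off heuristically from the ``one large summand'' picture, where $U_1\approx xn$ forces $U_1/S_n\approx x/(x+\sigma_Y^2)$ and the contribution is governed by the Beta integral $\int_0^\infty (x/(x+\sigma_Y^2))^\gamma x^{-\alpha-1}dx=(\sigma_Y^2)^{-\alpha}B(\gamma-\alpha,\alpha)$.

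The hard part will be the rigorous justification of the limit, which I would organize in three pieces. First, since the asymptotic for $g$ only holds as $t\to0$, split the integral at a fixed $t_0$; on $(t_0,\infty)$ use that $\phi$ is decreasing with $\phi(t)\to P(Y_1=0)<1$ (as $EU_1>0$), so $\phi(t)^{n-1}\le\rho_0^{\,n-1}$ with $\rho_0<1$, and this tail is exponentially negligible after bounding $\int_{t_0}^\infty t^{\gamma-1}g(t)\,dt$ by the exact value $\int_0^\infty t^{\gamma-1}g(t)\,dt=\Gamma(\gamma)$. Second, on $(0,t_0)$ the rescaled integrand must be dominated uniformly in $n$: here I would invoke the uniform convergence (Potter) bounds for the slowly varying $\ell$ to control $\ell(n\sigma_Y^2/s)/\ell(n)$ by $C\max(s^{\varepsilon},s^{-\varepsilon})$, together with $\phi(t)\le e^{-(1-\varepsilon)\sigma_Y^2 t}$ near $0$ to obtain $\phi(s/(n\sigma_Y^2))^{n-1}\le e^{-cs}$, yielding an integrable majorant of the form $s^{\alpha-1\pm\varepsilon}e^{-cs}$. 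Third, the endpoint $\alpha=1$ should be checked separately, since there $\sigma_Y^2<\infty$ is a genuine constraint on $\ell$ rather than an automatic consequence, though the same estimates go through.
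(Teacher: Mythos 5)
Your proposal is essentially the paper's own argument: the paper omits the proof of Theorem~\ref{l:fin}, stating only that it is ``similar to the proof of Theorem~\ref{l:inf}'', and that proof is precisely the Albrecher--Teugels route you reconstruct --- insert $S_n^{-\gamma}=\Gamma(\gamma)^{-1}\int_0^\infty t^{\gamma-1}e^{-tS_n}dt$, factor by independence into $\varphi_1^{n-1}\varphi_2$, obtain the small-$t$ asymptotics of $\varphi_2$ by integration by parts against the tail plus Karamata's Abelian theorem, rescale, and conclude by dominated convergence (the paper likewise invokes Fuchs \emph{et al.} for the domination on a compact interval and an exponential bound for the tail of the integral, matching your three-part justification). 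The only adaptation is the choice of scale, $a_n=n\sigma_Y^2$ instead of $a_n\sim nL(a_n)$, which is exactly what the finite-variance hypothesis buys.

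One substantive point deserves emphasis: your computation yields the constant
$\Gamma(\gamma-\alpha)\Gamma(1+\alpha)\big/\big((\sigma_Y^2)^{\alpha}\Gamma(\gamma)\big)$,
whereas the theorem as stated has $\sigma_Y^2$ to the first power. Your scale-invariance check is decisive here: $nE|\delta_{1,n}|^{2\gamma}$ is invariant under $Y_i\mapsto cY_i$, while that map sends $\ell(n)\mapsto c^{2\alpha}\ell(n)$ (asymptotically) and $\sigma_Y^2\mapsto c^2\sigma_Y^2$, so the right-hand side can only be invariant if $\sigma_Y^2$ appears to the power $\alpha$. The ``one large summand'' heuristic, via the Beta integral $\int_0^\infty\bigl(x/(x+\sigma_Y^2)\bigr)^\gamma\alpha x^{-\alpha-1}dx=(\sigma_Y^2)^{-\alpha}\alpha B(\gamma-\alpha,\alpha)$, confirms the same power. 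The two constants agree exactly when $\alpha=1$ (the regime closest to Theorem~\ref{l:inf}) or when $\sigma_Y^2=1$, which is presumably how the discrepancy went unnoticed; the subsequent Example in the paper inherits it. So your proof is correct, but it establishes a corrected version of the statement rather than the statement verbatim --- worth flagging as an erratum rather than as a flaw in your argument.
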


\begin{example}
Consider the case $P(|Y_1|^p>x)\sim \frac{1}{x\log^2 x}$, $p\in (2,3)$.
Then $E|Y_1|^\beta=\infty$ for any $\beta>p$, while $E|Y_1|^p<\infty$. Applying the
above result with $\gamma=3/2$ we obtain
$$
\Delta=nE\left|\delta_{1,n}\right|^3\sim
\frac{\Gamma((3-p)/2)\Gamma(1+p/2)}{\sigma_Y^2\Gamma(3/2)}\,\frac{n^{1-p/2}}{\log^2n}.
$$
Hence the additional knowledge of the tail behavior  yields a slightly better rate than the one obtained
in \eqref{p23}.
\end{example}

We now turn to the case when we have infinite second moments. 

\begin{theorem}\label{l:inf} Let $\{Y_i\}$ be an i.i.d.\ sequence, let $P(Y_1^2>x)\sim\ell(x)x^{-1}$,
with $\ell(x)$ slowly varying at $\infty$. If $E(Y_1^2)=\infty$,
then, for any $\gamma>1$, we have
$$
nE\left|\delta_{1,n}\right|^{2\gamma}\sim\frac{1}{\gamma-1}\frac{\ell(a_n)}{L(a_n)},
$$
where $L(x)=\int^x\big(\ell(t)/t\big)dt$ and $\{a_n\}$ is a sequence satisfying
$a_n\sim n L(a_n)$.
\end{theorem}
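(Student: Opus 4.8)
The plan is to reduce the claim to a statement about sums of nonnegative i.i.d.\ variables in the domain of attraction of a stable law of index one, and to extract the asymptotics by a Laplace-transform (Tauberian) analysis. Writing $W_i=Y_i^2$ and $S_n=V_n^2=\sum_{i=1}^nW_i$, the $W_i$ are i.i.d.\ nonnegative with $P(W_1>x)\sim\ell(x)x^{-1}$ and $EW_1=\infty$, and the target becomes
$$
nE\left|\delta_{1,n}\right|^{2\gamma}=nE\left[\frac{W_1^\gamma}{S_n^\gamma}\right].
$$
First I would insert the gamma representation $S_n^{-\gamma}=\Gamma(\gamma)^{-1}\int_0^\infty t^{\gamma-1}e^{-tS_n}\,dt$ (valid for every $\gamma>0$) and use the independence of $W_1$ from $W_2,\dots,W_n$ to factorize $E[W_1^\gamma e^{-tS_n}]=E[W_1^\gamma e^{-tW_1}]\,\phi(t)^{n-1}$, where $\phi(t)=E[e^{-tW_1}]$. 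This gives
$$
nE\left[\frac{W_1^\gamma}{S_n^\gamma}\right]=\frac{n}{\Gamma(\gamma)}\int_0^\infty t^{\gamma-1}E[W_1^\gamma e^{-tW_1}]\,\phi(t)^{n-1}\,dt.
$$

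The next step is to determine the behaviour of the two transforms as $t\to0$. Integrating by parts and applying Karamata's Tauberian theorem (legitimate since $\gamma>1$) to the transforms of $x^{\gamma-2}\ell(x)$ and $x^{\gamma-1}\ell(x)$, I expect
$$
E[W_1^\gamma e^{-tW_1}]\sim\frac{\Gamma(\gamma)}{\gamma-1}\,t^{-(\gamma-1)}\ell(1/t),\qquad t\to0,
$$
so that the algebraic factor $t^{\gamma-1}$ in the integrand cancels exactly. For the Laplace transform itself the relevant input is the boundary ($\alpha=1$) Tauberian estimate $1-\phi(t)\sim tL(1/t)$ as $t\to0$, which is where the divergence $EW_1=\infty$ (equivalently $L(x)\to\infty$) enters and which fixes the natural scale $t\asymp a_n^{-1}$ through $a_n\sim nL(a_n)$.

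I would then rescale by setting $t=s/a_n$. Using $\phi(t)^{n-1}=\exp((n-1)\log\phi(t))$ together with $1-\phi(s/a_n)\sim(s/a_n)L(a_n/s)\sim(s/a_n)L(a_n)$ and $nL(a_n)\sim a_n$, the factor $\phi(s/a_n)^{n-1}$ converges pointwise to $e^{-s}$; combined with the cancellation above and the slow variation $\ell(a_n/s)\sim\ell(a_n)$, the integrand divided by the conjectured limit $\frac{1}{\gamma-1}\frac{\ell(a_n)}{L(a_n)}=\frac{n}{(\gamma-1)a_n}\ell(a_n)$ tends to $e^{-s}$. Since $\int_0^\infty e^{-s}\,ds=1$, this produces precisely the asserted constant.

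The main obstacle is not the pointwise computation but the justification of passing the limit inside the integral, which I would handle by dominated convergence. Near $s=0$ the ratio $\ell(a_n/s)/\ell(a_n)$ is controlled by Potter's bounds, yielding a dominating factor of the form $s^{-\varepsilon}$ that is integrable against the near-constant exponential; for large $s$ the uniform inequality $\phi(t)^{n-1}\le\exp(-(n-1)(1-\phi(t)))$ together with the lower Potter bound on $L(a_n/s)/L(a_n)$ forces essentially exponential decay, making the tail integrable. Securing these uniform bounds, and confirming the two Tauberian asymptotics in the precise slowly varying form above, is the technical heart of the argument; the rest is bookkeeping.
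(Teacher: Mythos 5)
Your proposal is correct and follows essentially the same route as the paper: the gamma-integral representation of $S_n^{-\gamma}$, factorization via independence into $\varphi_1(t)^{n-1}$ and $\varphi_2(t)=E[W_1^\gamma e^{-tW_1}]$, Karamata's Tauberian theorem giving $\varphi_2(t)\sim\Gamma(\gamma-1)t^{-(\gamma-1)}\ell(1/t)$ (your constant $\Gamma(\gamma)/(\gamma-1)$ is the same thing), the rescaling $t=s/a_n$ with $\varphi_1(s/a_n)^{n-1}\to e^{-s}$, and dominated convergence. The paper handles the domination step by splitting off the tail $[\epsilon,\infty)$ and citing Lemma~5.1 of Fuchs \emph{et al.}\ on $[0,\epsilon]$, where you propose Potter bounds plus $\varphi_1^{n-1}\le\exp(-(n-1)(1-\varphi_1))$, but this is a minor technical variation on the same argument.
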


\begin{example} Assume  that $P(Y_1^2>x)\sim x^{-1}(\log x)^{-2}$. Then $EY_1^2<\infty$ and
by Theorem~\ref{l:fin} we get that for all $n\geq 1$
$$
d_K\big(\psi_n(\bX,\bY),Z\big)\leq A(\log n)^{-2},
$$
for some large enough constant $A$.
If $P(Y_k^2>x)\sim x^{-1}(\log x)^{-1}$ then $EY_1^2=\infty$. But since $\tilde\ell(x)=
\log\log x$
and $a_n\sim n\log\log n$ we still get by Theorem~\ref{l:inf} applied with $\gamma=3/2$
$$
d_K\big(\psi_n(\bX,\bY),Z\big)\leq A
\big((\log n)\log\log n\big)^{-1},
$$
and thus an explicit convergence rate to the normal law.
\end{example}


We conclude with a result which shows that, even in the case when $Y_1$ is in the domain of attraction of an $\alpha$-stable law
with $\alpha$ strictly less but close to $2$ we can get non-trivial bounds.
\begin{theorem}
Let $\{Y_i\}$ be an i.i.d.\ sequence, assume that $P(|Y_1|>x)\sim \ell(x)x^{-\alpha}$ with $\alpha\in (0,2)$. Then for any $\gamma>1$
\begin{equation}\label{p24}
nE\left|\delta_{1,n}\right|^{2\gamma}\sim \dfrac{\Gamma(\gamma-\alpha/2)}{\Gamma(\gamma)\Gamma(1-\alpha/2)}.
\end{equation}
\end{theorem}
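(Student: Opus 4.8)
The plan is to reduce the statement to a moment computation for the ratio of a single summand to the total sum of i.i.d.\ positive variables, and then to extract the asymptotics by a Laplace-transform (Tauberian) argument. Write $W_i=Y_i^2$ and $T_n=W_1+\cdots+W_n=V_n^2$, so that $|\delta_{1,n}|^{2\gamma}=(W_1/T_n)^\gamma$ and $nE|\delta_{1,n}|^{2\gamma}=nE[(W_1/T_n)^\gamma]$. The $W_i$ are i.i.d., nonnegative, and satisfy $P(W_1>x)=P(|Y_1|>\sqrt{x})\sim\tilde\ell(x)x^{-\beta}$ with $\beta:=\alpha/2\in(0,1)$ and $\tilde\ell(x):=\ell(\sqrt{x})$ slowly varying; in particular $EW_1=\infty$. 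Since $W_1\leq T_n$, the quantity $(W_1/T_n)^\gamma\leq 1$ is bounded, so all expectations below are finite.

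First I would linearize the negative power by the Gamma integral $T_n^{-\gamma}=\frac{1}{\Gamma(\gamma)}\int_0^\infty t^{\gamma-1}e^{-tT_n}\,dt$, valid for $\gamma>0$ and $T_n>0$. Inserting this, using Tonelli (the integrand is nonnegative) and the independence of the $W_i$, I obtain
$$nE\big[(W_1/T_n)^\gamma\big]=\frac{n}{\Gamma(\gamma)}\int_0^\infty t^{\gamma-1}\,g(t)\,\phi(t)^{n-1}\,dt,$$
where $\phi(t)=Ee^{-tW_1}$ is the Laplace transform of $W_1$ and $g(t)=E[W_1^\gamma e^{-tW_1}]$. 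The next step is to determine the behaviour of $\phi$ and $g$ near $0$. By Karamata's Tauberian theorem applied to the tail $P(W_1>x)\sim\tilde\ell(x)x^{-\beta}$ one has $1-\phi(t)\sim\Gamma(1-\beta)\,\tilde\ell(1/t)\,t^{\beta}$ as $t\downarrow 0$. For $g$, note that $\int_0^x y^\gamma\,dF_W(y)\sim\frac{\beta}{\gamma-\beta}x^{\gamma-\beta}\tilde\ell(x)$ (here $\gamma>\beta$, which holds since $\gamma>1>\beta$), so a second application of the Tauberian theorem gives $g(t)\sim\beta\,\Gamma(\gamma-\beta)\,\tilde\ell(1/t)\,t^{-(\gamma-\beta)}$ as $t\downarrow 0$. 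Only small values of $t$ will matter, since $\phi(t)^{n-1}$ is exponentially small otherwise.

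Finally I would rescale. Let $a_n\to\infty$ be the stable norming sequence characterized by $n\,\Gamma(1-\beta)\,\tilde\ell(a_n)\,a_n^{-\beta}\to 1$ (equivalently $n(1-\phi(1/a_n))\to 1$); it is regularly varying of index $1/\beta$. Substituting $t=\tau/a_n$ and using the two asymptotics together with $\tilde\ell(a_n/\tau)\sim\tilde\ell(a_n)$, the integrand converges pointwise: $\phi(\tau/a_n)^{n-1}\to e^{-\tau^\beta}$, while the remaining factors combine to $\frac{\beta\Gamma(\gamma-\beta)}{\Gamma(\gamma)}\,[\,n\tilde\ell(a_n)a_n^{-\beta}\,]\,\tau^{\beta-1}$, whose prefactor tends to $\frac{\beta\Gamma(\gamma-\beta)}{\Gamma(\gamma)\Gamma(1-\beta)}$. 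Passing to the limit and using $\int_0^\infty\tau^{\beta-1}e^{-\tau^\beta}\,d\tau=1/\beta$ (substitute $v=\tau^\beta$) yields
$$nE|\delta_{1,n}|^{2\gamma}\to\frac{\beta\Gamma(\gamma-\beta)}{\Gamma(\gamma)\Gamma(1-\beta)}\cdot\frac{1}{\beta}=\frac{\Gamma(\gamma-\alpha/2)}{\Gamma(\gamma)\Gamma(1-\alpha/2)},$$
which is the claim.

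The main obstacle is the rigorous justification of the interchange of limit and integral. I would build a dominating function, uniform in $n$, by invoking Potter's bounds for the slowly varying factors: near $\tau=0$ this controls the integrable singularity $\tau^{\beta-1}$ up to an arbitrarily small power, and for large $\tau$ one needs a uniform lower estimate of the form $n(1-\phi(\tau/a_n))\geq c\,\tau^{\beta-\epsilon}$, so that $\phi(\tau/a_n)^{n-1}\leq e^{-c\tau^{\beta-\epsilon}}$ dominates the polynomial growth coming from $t^{\gamma-1}g(t)$. Handling this intermediate and large-$\tau$ regime, where the pointwise Tauberian asymptotics no longer apply directly, is the technically delicate point; the hypothesis $\gamma>1$ guarantees $\gamma>\beta$ and keeps all the Gamma factors and truncated-moment integrals convergent throughout.
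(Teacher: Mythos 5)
Your proposal is correct and follows essentially the same route the paper takes for this family of results (the Albrecher--Teugels Gamma-integral representation, Karamata's Tauberian theorem for $\varphi_1$ and $\varphi_2$ near zero, rescaling by $a_n$, and dominated convergence --- which the paper justifies via Lemma 5.1 of Fuchs \emph{et al.} where you invoke Potter bounds); the constant $\Gamma(\gamma-\alpha/2)/(\Gamma(\gamma)\Gamma(1-\alpha/2))$ comes out correctly from $\int_0^\infty\tau^{\beta-1}e^{-\tau^\beta}d\tau=1/\beta$ with $\beta=\alpha/2$.
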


\begin{example}
We apply this result with $\gamma=3/2$. Let $\alpha=2-\varepsilon$ for small $\varepsilon>0$.
Observing that under the above assumptions $\Gamma(3/2-\alpha/2)/\Gamma(3/2)< 2$, and $1/\Gamma(\varepsilon)\sim\varepsilon$ for
$\varepsilon\to 0$ we get by Lemma~\ref{th:main} that  for large enough $n$
$$
d_K(\psi_n(\bX,\bY),Z)\leq 0.56\,\xi_3\,\varepsilon.
$$
When the distribution of the $Y_k$ is symmetric, we can conclude that for sufficiently large sample size $n$ we have
$
d_K(S_n/V_n,Z)\leq 0.56\,\varepsilon.
$
\end{example}

\section{Proofs}

In the sequel  we need the following version of Hoeffding's inequality (see e.g.\ Shao~\cite[p. 145]{shao:2005}).
\begin{lemma}\label{l:hoeffding}
Let $\{Z_i,\,1\leq i\leq n\}$ be independent non-negative random variables with
$\mu=\sum_{i=1}^nEZ_i$ and $\sigma^2=\sum_{i=1}^nEZ_i^2<\infty$. Then for
$0<x<\mu$
$$
P\left(\sum_{i=1}^nZ_i\leq x\right)\leq\exp\left(-\frac{(\mu-x)^2}{2\sigma^2}\right).
$$
\end{lemma}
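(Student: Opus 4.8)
The plan is to derive this lower-tail bound by a Chernoff argument applied to the \emph{negative} of the sum, using the non-negativity of the summands in an essential way. Write $S=\sum_{i=1}^nZ_i$ and fix $x<\mu$. For any free parameter $t>0$, since $u\mapsto e^{-tu}$ is decreasing, Markov's inequality gives
$$
P(S\leq x)=P\bigl(e^{-tS}\geq e^{-tx}\bigr)\leq e^{tx}\,E\bigl[e^{-tS}\bigr]=e^{tx}\prod_{i=1}^nE\bigl[e^{-tZ_i}\bigr],
$$
the last equality using independence. The task is then reduced to bounding each Laplace transform $E[e^{-tZ_i}]$ and optimizing in $t$.

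The key step, and the only point at which $Z_i\geq 0$ enters, is the elementary inequality $e^{-u}\leq 1-u+u^2/2$ for $u\geq 0$; I would justify it by noting that $f(u)=1-u+u^2/2-e^{-u}$ has $f(0)=f'(0)=0$ and $f''(u)=1-e^{-u}\geq 0$ on $[0,\infty)$. Applying it with $u=tZ_i$ and taking expectations, then using $1+a\leq e^{a}$, yields
$$
E\bigl[e^{-tZ_i}\bigr]\leq 1-t\,EZ_i+\tfrac{t^2}{2}EZ_i^2\leq \exp\Bigl(-t\,EZ_i+\tfrac{t^2}{2}EZ_i^2\Bigr).
$$
Taking the product over $i$ collapses the two sums into $\mu$ and $\sigma^2$, so that
$$
P(S\leq x)\leq \exp\Bigl(-t(\mu-x)+\tfrac{t^2}{2}\sigma^2\Bigr).
$$

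It remains to optimize the exponent $g(t)=-t(\mu-x)+\tfrac{t^2}{2}\sigma^2$ over $t>0$. Its unconstrained minimizer is $t^{\ast}=(\mu-x)/\sigma^2$, which is admissible (strictly positive) exactly because of the hypothesis $x<\mu$; note that $\sigma^2>0$ is automatic, since $0<x<\mu$ forces some $EZ_i>0$ and hence $EZ_i^2>0$. Substituting $t^{\ast}$ gives $g(t^{\ast})=-(\mu-x)^2/(2\sigma^2)$, which is the claimed bound. I do not expect a genuine obstacle here: this is the standard Bernstein/Chernoff computation, and the only steps requiring any care are the direction of the quadratic Taylor bound (which hinges on non-negativity of the summands) and confirming that the optimal $t$ falls in the admissible range $t>0$.
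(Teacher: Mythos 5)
Your proof is correct and complete: the Markov/Chernoff step, the pointwise bound $e^{-u}\leq 1-u+u^2/2$ for $u\geq 0$ (which is exactly where non-negativity enters), the passage via $1+a\leq e^a$, and the optimization at $t^{\ast}=(\mu-x)/\sigma^2$ (admissible since $x<\mu$, with $\sigma^2>0$ correctly ruled in) are all carried out without gaps. Note that the paper itself gives no proof of this lemma, quoting it as a known version of Hoeffding's inequality from Shao (2005, p.~145); the argument you reconstructed is precisely the standard exponential-moment proof behind that citation, so there is nothing to reconcile.
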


\begin{proof}[Proof of Lemma~\ref{l:phi}]
Let
\begin{equation*}
\phi := \sqrt{n}\rho_n(\bX,\bY).
\end{equation*}
 Note that $\phi=\sqrt{\frac{n}{B_n^2}}\psi$. Then, for all $h\in\mathcal{H}$,  by the mean value theorem  we get (recall that $\|h'\|\leq 1$)
$$
\left|
Eh(\phi)-Eh(\psi)
\right|\leq E\left[\left|\sqrt{\frac{n}{B_n^2}}-1\right||\psi|\right].
$$
Let $\varepsilon\in (0,1)$. Then the last term is bounded by $\frac{1}{2}A_n^{(1)}+A_n^{(2)}$, where
$$
A_n^{(1)}:=E\left[\left|\frac{n}{B_n^2}-1\right||\psi|I\{B_n^2>(1-\varepsilon)n\}\right]
$$
and
$$
A_n^{(2)}:=
 E\left[\left|\sqrt{\frac{n}{B_n^2}}-1\right||\psi|I\{B_n^2\leq(1-\varepsilon)n\}\right].
$$
Define $\kappa_4=E(X_1^2-1)^2$. Then
\begin{align*}
A_n^{(1)}&\leq\frac{1}{1-\varepsilon}E\left[\left|\frac{1}{n}\sum_{i=1}^n(X_i^2-1)\right||\psi|\right]\\
&\leq \frac{1}{1-\varepsilon}\left(E\left[\bigg|\frac{1}{n}\sum_{i=1}^n(X_i^2-1)\bigg|^2\right]\right)^{1/2}\times \left(E|\psi|^2\right)^{1/2}\\
&= \frac{1}{1-\varepsilon}\sqrt{\frac{\kappa_4}{n}}.
\end{align*}
For estimating $A_n^{(2)}$ we use
$$
|\psi|=\left|\sum_{i=1}^nX_i\frac{Y_i}{V_n}\right|\leq \left(\sum_{i=1}^nX_i^2\right)^{1/2}\left(\sum_{i=1}^n\frac{Y_i^2}{V_n^2}\right)^{1/2}=B_n.
$$
Thus
\begin{align*}
A_n^{(2)}&\leq E|\sqrt{n}-B_n|I\{B_n^2\leq (1-\varepsilon)n\}\leq \sqrt{n}P(B_n^2\leq (1-\varepsilon)n).
\end{align*}
By Lemma~\ref{l:hoeffding} we get that
$P(B_n^2\leq (1-\varepsilon)n)\leq \exp(-\varepsilon^2n/(2m_4))$.
Collecting our estimates we have
$$
\left|
Eh(\phi)-Eh(\psi)
\right|\leq\frac{1}{2}\frac{1}{1-\varepsilon}\sqrt{\frac{\kappa_4}{n}}+
\sqrt{n}\exp(-\varepsilon^2n/(2m_4)).
$$
For large enough $n$ we have $\varepsilon^2:=(2m_4\log n)/n\leq 1/4$, and since
$m_4=\kappa_4+1$ we conclude
$$
\left|
Eh(\phi)-Eh(\psi)
\right|\leq \sqrt{\frac{\kappa_4}{n}}+\frac{1}{\sqrt{n}}\leq \sqrt{\frac{2m_4}{n}}.
$$
\end{proof}

\begin{proof}[Proof of Theorem~\ref{l:fin3}]
Let $\tilde{Y}_{i,n}=Y_i\wedge n^{1/p}$. Then
\begin{equation}\label{tildeY}
E\tilde{Y}_{i,n}^2=1-\varepsilon_n\quad\text{with $\varepsilon_n\to 0$.}
\end{equation}
Further
\begin{align}
E\tilde{Y}_{i,n}^4&=\int_0^{n^{1/p}}x^4dP(Y_1\leq x)\leq n^{2/p}\int_0^{\infty}x^2dP(Y_1\leq x)= n^{2/p}.\label{tildeY2}
\end{align}
Now fix an arbitrarily small $\varepsilon>0$ and let $n$ be  large enough in
order to have $\varepsilon_n\leq \varepsilon/2$.
Using Lemma~\ref{l:hoeffding} with \eqref{tildeY} and \eqref{tildeY2}
it follows that
\begin{equation}\label{tildeY3}
P\left(\tilde{Y}_{2,n}^2+\cdots+ \tilde{Y}_{n,n}^2\leq (1-\varepsilon)(n-1)\right)
\leq\exp\left(-\frac{\varepsilon^2}{8}(n-1)^{1-2/p}\right).
\end{equation}
Next we observe that
\begin{align*}
&\left(\frac{Y_1^2}{Y_1^2+\cdots+Y_n^2}\right)^{p/2}
\leq\mathrm{min}\left\{1,\frac{|Y_1|^p}{(\tilde{Y}_{2,n}^2+\cdots+ \tilde{Y}_{n,n}^2)^{p/2}}\right\}\\
&\qquad\leq
I\{\tilde{Y}_{2,n}^2+\cdots+ \tilde{Y}_{n,n}^2\leq (1-\varepsilon)n\}\\
&\qquad\quad+|Y_1|^p\left(\frac{1}{n(1-\varepsilon)}\right)^{p/2}
I\{\tilde{Y}_{2,n}^2+\cdots+ \tilde{Y}_{n,n}^2> (1-\varepsilon)n\}
\end{align*}
This and \eqref{tildeY3} give
\begin{align}
nE|\delta_{1,n}|^p&=nE\left(\frac{Y_1^2}{\sum_{k=1}^nY_k^2}\right)^{p/2}\nonumber\\
&\leq nP(\tilde{Y}_{2,n}^2+\cdots+ \tilde{Y}_{n,n}^2\leq (1-\varepsilon)n)
+E|Y_1|^p\left(\frac{1}{1-\varepsilon}\right)^{p/2}n^{1-p/2}\nonumber\\
&\sim E|Y_1|^p\left(\frac{1}{1-\varepsilon}\right)^{p/2}n^{1-p/2}\quad (n\to\infty).
\label{lastf}
\end{align}
On the other hand we have
\begin{align}
&nE\left(\frac{Y_1^2}{\sum_{k=1}^nY_k^2}\right)^{p/2}\nonumber\\
&\quad\geq E|Y_1|^pI\{|Y_1|^p<n\}E\left(\frac{1}{n^{-1}
\sum_{k=2}^nY_k^2+n^{(2-p)/p}}\right)^{p/2}n^{1-p/2}\nonumber\\
&\quad\geq E|Y_1|^pI\{|Y_1|^p<n\}\left(\frac{1}{1+n^{(2-p)/p}}\right)^{p/2}
P\left(\frac{1}{n}\sum_{k=2}^nY_k^2\leq 1\right)n^{1-p/2}\nonumber\\
&\quad\sim E|Y_1|^pn^{1-p/2}\quad
(n\to \infty),\label{last}
\end{align}
where in the last step we used the law of large numbers to obtain $P(\frac{1}{n}\sum_{k=2}^nY_k^2\leq 1)\to 1$. Now \eqref{lastf} holds
for arbitrarily small $\varepsilon>0$. Together with \eqref{last}
the proof follows.
\end{proof}

\begin{proof}[Proof of Theorem~\ref{l:inf}]

We borrow an idea of Albrecher and Teugels~\cite{albrecher:teugels}.
The crucial trick is to write
$$
\frac{1}{x^\gamma} = \frac{1}{\Gamma(\gamma)}\int_0^\infty s^{\gamma-1}e^{-sx}ds,\quad \gamma>0.
$$
Then, since
$$
E\big|\delta_{1,n}^2\big|^\gamma  =E\left|\frac{Y_1^2}{\sum_{k=1}^nY_k^2}\right|^{\gamma},
$$
we obtain
$$
E\big|\delta_{1,n}^2\big|^\gamma   = \frac{1}{\Gamma(\gamma)}\int_0^\infty s^{\gamma-1}\left(\varphi_1(s)\right)^{n-1}\varphi_2(s)ds,
$$
with $\varphi_1(s) := E\big(e^{-sY_1^2}\big)$ and $\varphi_2(s)
:= E\left([Y_1^2]^\gamma  e^{-s Y_1^2}\right)$.  Choosing $a_n\to\infty$ such that $na_n^{-1}L(a_n)\to1$ one easily shows (see \cite{albrecher:teugels}) that
for any $s>0$
\begin{equation}\label{eqdeltan1}
\lim_{n\to\infty}\varphi_1^{n-1}\left(\frac{s}{a_n}\right)=e^{-s}.
\end{equation}
In order to determine $\varphi_2(s)$ we introduce
$$
\Gamma_{\gamma,s}(x)=\int_0^xt^\gamma e^{-st}dt,\quad \gamma>1,\quad s,x>0.
$$
Note that $\lim_{x\to\infty}\Gamma_{p,s}(x)=s^{-(\gamma+1)}\Gamma(\gamma+1)$.
Further we let $F$ be the distribution function of $Y_1^2$.
Using integration by parts, we get
$$
\int_0^\infty F(x)d\Gamma_{\gamma,s}(x)=s^{-(\gamma+1)}\Gamma(\gamma+1)-\int_0^\infty\Gamma_{\gamma,s}(x)dF(x).
$$
A simple consequence is that
$$
\int_0^\infty \Gamma_{\gamma,s}(x)dF(x)=\int_0^\infty(1-F(x))d\Gamma_{\gamma,s}(x)=\int_0^\infty x^\gamma e^{-sx}(1-F(x))dx.
$$
Since $\gamma\Gamma_{\gamma-1,s}(x)-s\Gamma_{\gamma,s}(x)=x^\gamma e^{-sx}$ we conclude that
\begin{align*}
\varphi_2(s)&=\int_0^\infty x^\gamma e^{-sx}dF(x)\\
&=\gamma\int_0^\infty \Gamma_{\gamma-1,s}(x)dF(x)-s\int_0^\infty\Gamma_{\gamma,s}(x) dF(x)\\
& = \gamma\int_0^\infty x^{\gamma-1}(1-F(x))e^{-sx}dx-s\int_0^\infty x^\gamma(1-F(x))e^{-sx}dx.
\end{align*}
By our assumption $1-F(x)\sim x^{-1}\ell(x)$ and thus by
Karamata's Tauber theorem (see e.g.\ Bingham \emph{et al.}~\cite[Theorem 1.7.6]{bgt:1987}) we have for any $\rho>-1$
$$
\int_0^\infty x^{\rho}(1-F(x))e^{-sx}dx\sim \Gamma(\rho)s^{-\rho}\ell(1/s)\quad
\text{as}\quad s\to 0.
$$
Combining with our just derived formula for $\varphi_2(s)$
 we have for $\gamma >1$
\begin{equation}\label{eqdeltan2} \varphi_2(s) \sim \frac{\ell(1/s)}{s^{\gamma -1}}\Gamma(\gamma -1)\quad \text{as $s\to 0$.}
\end{equation}

Finally consider the quantity
$$nE\big|\delta_{1,n}^2\big|^\gamma   = \frac{n}{\Gamma(\gamma)}\int_0^{\infty}t^{\gamma -1}\varphi_1^{n-1}(t)\varphi_2(t)dt.$$
It is easy to show that  $n\int_\epsilon^{\infty}t^{\gamma -1}\varphi_1^{n-1}(t)\varphi_2(t)dt\to 0$ for all $\epsilon>0$. Hence we can restrict the integration to the compact interval $[0,\epsilon]$, on which Lemma 5.1 of Fuchs \emph{et al.} \cite{fuchs:etal} can be used to uniformly bound the integrand above by an integrable function. Using the already defined $a_n$,  we therefore get from (\ref{eqdeltan1}), (\ref{eqdeltan2}) and dominated convergence
\begin{align*}nE\big|\delta_{1,n}^2\big|^\gamma & \sim \frac{n}{\Gamma(\gamma)}\int_0^{\epsilon}t^{\gamma-1}\varphi_1^{n-1}(t)\varphi_2(t)dt\\
& \sim \frac{n}{\Gamma(\gamma)}\left(\frac{1}{a_n}\right)^\gamma\int_0^{\infty}t^{\gamma-1}\varphi_1^{n-1}(t/a_n)\varphi_2(t/a_n)dt\\
& \sim  \frac{\Gamma(\gamma-1)}{\Gamma(\gamma)} \frac{n \ell(a_n)}{a_n},\quad\text{as $n\to \infty$.}
\end{align*}
The relation $\frac{n \ell(a_n)}{a_n}\sim \frac{\ell(a_n)}{L(a_n)}$ concludes the proof.
\end{proof}

The proof of Theorem~\ref{l:fin} is similar to the proof of Theorem~\ref{l:inf}
and will therefore be omitted.

\subsection*{Acknowledgments.} The authors thank Marc Hallin and Thomas Verdebout for providing the incentive for this paper.

\

\end{document}